\theoremstyle{plain}
\newtheorem{thm}{Theorem}[section]
\theoremstyle{definition}
\newtheorem{rem}[thm]{Remark}
\DeclareMathOperator{\Cent}{Cent}
\begin{document} 

\title[On finite groups with exactly two non-abelian centralizers]{On finite groups with exactly two non-abelian centralizers} 

\author[S. J. Baishya  ]{Sekhar Jyoti Baishya} 
\address{S. J. Baishya, Department of Mathematics, Pandit Deendayal Upadhyaya Adarsha Mahavidyalaya, Behali, Biswanath-784184, Assam, India.}

\email{sekharnehu@yahoo.com}

\begin{abstract}
In this paper, we  characterize finite group $G$ with unique proper non-abelian element centralizer. This improves \cite[Theorem 1.1]{nab}. Among other results, we have proved that if $C(a)$ is the proper non-abelian element centralizer of $G$ for some $a \in G$, then $\frac{C(a)}{Z(G)}$ is the Fitting subgroup of $\frac{G}{Z(G)}$, $C(a)$ is the Fitting subgroup of $G$ and  $G' \in C(a)$, where $G'$ is the commutator subgroup of $G$. 
\end{abstract}

\subjclass[2010]{20D60, 20D99}
\keywords{Finite group, Centralizer, Partition of a group}
\maketitle

\section{Introduction} \label{S:intro}

Throughout this paper $G$ is a finite group with center $Z(G)$ and commutator subgroup $G'$. Given a group $G$, let $\Cent(G)$ denote the set of centralizers of $G$, i.e., $\Cent(G)=\lbrace C(x) \mid x \in G\rbrace $, where $C(x)$ is the centralizer of the element $x$ in $G$. The study of finite groups in terms of $|\Cent(G)|$, becomes an interesting research topic in last few years. Starting with Belcastro and Sherman \cite{ctc092} in 1994  many authors have been studied and characterised finite groups $G$ in terms of $\mid \Cent(G)\mid$. More information on this and related concepts may be found in  \cite{ed09, amiri2019, amiri20191, rostami, en09, ctc09, ctc091, ctc099, baishya, baishya1, baishya2,baishya5, zarrin0941, zarrin0942, non, con}.

Amiri and Rostami \cite{nab} in 2015 introduced the notion of  $nacent(G)$ which is the set of all non-abelian centralizers of $G$. Schmidt \cite {schmidt} characterized all groups $G$ with $\mid nacent(G)\mid=1$ which are called CA-groups. The authors in \cite{nab} initiated the study of finite groups with  $\mid nacent(G)\mid=2$ and proved the following result (\cite[Theorem 1.1]{nab}):

\begin{thm}\label{nabcentamiri} 
Let $G$ be a finite group such that $\mid nacent(G)\mid=2$. If $C(a)$ is a proper non-abelian centralizer for some $a \in G$, then one of the following assertions hold:
\begin{enumerate}
	\item  $\frac{G}{Z(G)}$ is a $p$-group for some prime $p$.
	\item  $C(a)$ is the Fitting subgroup of $G$ of prime index $p$, $p$ divides $|C(a)|$ and $\mid Cent(G) \mid= \mid Cent(C(a)) \mid+ j+1 $, where $j$ is the number of distinct centralizers $C(g)$ for $g \in G \setminus C(a)$.
	\item  $\frac{G}{Z(G)}$ is a Frobenius group with cyclic Frobenius complement $\frac{C(x)}{Z(G)}$ for some $x \in G$.
\end{enumerate}
\end{thm}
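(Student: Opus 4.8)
The plan is to extract as much structure as possible from the hypothesis, which I restate as $nacent(G)=\{G,N\}$ with $N:=C(a)$ (since $G$ is non-abelian, $G\in nacent(G)$, so $N$ is the \emph{unique} proper member), and then to split the conclusion according to the isomorphism type of $\overline G:=G/Z(G)$. I would begin by recording three observations. (i) $N\trianglelefteq G$, because every conjugate $N^{g}=C(a^{g})$ is again a proper non-abelian centralizer, hence equals $N$. (ii) The set of $x\in G$ with $C(x)$ non-abelian is exactly $Z(N)$: since $G$ and $N$ are the only non-abelian centralizers, $C(x)$ is non-abelian iff $C(x)\in\{G,N\}$; and $C(x)=G\iff x\in Z(G)$, whereas $C(x)=N\iff x\in Z(N)\setminus Z(G)$ (if $x\in Z(N)\setminus Z(G)$ then $N\le C(x)\ne G$ forces $C(x)=N$; conversely $C(x)=N$ gives $x\in Z(C(x))=Z(N)$ and $x\notin Z(G)$). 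In particular $a\in Z(N)$, and $C(y)$ is abelian for every $y\in G\setminus Z(N)$. (iii) $N$ is a CA-group: for $y\in N\setminus Z(N)$ we have $y\notin Z(G)$ and $C(y)\ne N$ (else $y\in Z(N)$ by (ii)), so $C(y)$ is abelian and $C_{N}(y)=C(y)\cap N$ is abelian.

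Next I would set up a partition-type decomposition of $\overline G$. Note that if $C(x),C(y)$ are both abelian and $y\in C(x)$, then $C(x)\le C(y)$ and $C(y)\le C(x)$, so $C(x)=C(y)$. Moreover, if $x\in G\setminus N$ then $C(x)$ is abelian, and each $y\in C(x)\setminus Z(G)$ has $C(y)$ abelian (if $C(y)=N$ then $x\in C(y)=N$; if $C(y)=G$ then $y\in Z(G)$), hence $C(y)=C(x)$. Writing $C_{1},\dots,C_{j}$ for the distinct members of $\{\,C(g)\mid g\in G\setminus N\,\}$ (this $j$ being the integer in the statement), it follows that $C_{i}\cap C_{k}=Z(G)$ for $i\ne k$, that $C_{i}\cap Z(N)=Z(G)$, and that $\bigcup_{i}C_{i}\supseteq G\setminus N$. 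Passing to $\overline G$, the subgroups $\overline{C_{i}}:=C_{i}/Z(G)$ are nontrivial and abelian, pairwise intersect trivially, and satisfy $\overline G\setminus\overline N\subseteq\bigcup_{i}\overline{C_{i}}$, where $\overline N:=N/Z(G)$ is a non-abelian normal subgroup of $\overline G$.

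Then I would run the trichotomy. If $\overline G$ is a $p$-group we are in case (1). Otherwise I would split on whether some $\overline{C_{i}}$ meets $\overline N$ nontrivially. If none does, then $\{\overline N\}\cup\{\overline{C_{1}},\dots,\overline{C_{j}}\}$ is a nontrivial partition of $\overline G$ in which $\overline N$ is a proper nontrivial normal component; by the Baer--Kegel--Suzuki classification of finite groups with a nontrivial partition, a group that is not a $p$-group and admits such a partition with a proper nontrivial normal component is Frobenius, necessarily with kernel $\overline N$ and with the $\overline{C_{i}}$ as complements. Since $C(x)$ is abelian, a complement $C(x)/Z(G)$ is abelian, hence cyclic, which is case (3). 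If on the other hand some $\overline{C_{i}}$ meets $\overline N$ nontrivially, I would first deduce $Z(N)=Z(G)$: such an $i$ provides $y\in N\setminus Z(N)$ with $C(y)=C_{i}$, and then every $g\in C_{i}\setminus N$ commutes with $C_{N}(y)\supseteq Z(N)$, so $Z(N)\le C(g)=C_{i}$ and therefore $Z(N)=C_{i}\cap Z(N)=Z(G)$; using this together with (iii) and the maximal-abelian structure of $N$, I would show that $N$ is a maximal subgroup of $G$ of prime index $p$ with $p\mid|N|$ and $N=F(G)$, and then a count of centralizers would give $|Cent(G)|=|Cent(C(a))|+j+1$, which is case (2).

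I expect the real obstacle to be this last sub-case: turning "$Z(N)=Z(G)$ and some $\overline{C_{i}}$ meets $\overline N$ nontrivially" into the precise assertions that $N$ is maximal of prime index and equals the Fitting subgroup, and then verifying the counting identity --- in particular the disjointness of $\{G\}$, $Cent(C(a))$ and $\{C_{1},\dots,C_{j}\}$ inside $Cent(G)$, and the identification $C_{C(a)}(x)=C_{G}(x)$ for $x\in C(a)$. That is where one must bring in the structure theorem for CA-groups applied to $N$ and a careful analysis of how $\overline N$, its maximal abelian subgroups, and the $\overline{C_{i}}$ fit together.
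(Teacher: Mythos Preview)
Your setup in (i)--(iii) is correct, and passing to a partition of $\overline G=G/Z(G)$ is exactly the right move, but your trichotomy misfires at the case split. The branch ``some $\overline{C_i}$ meets $\overline N$ nontrivially'' is in fact \emph{vacuous}: if $y\in C_i\cap N$ with $y\notin Z(G)$, then (as you yourself noted) $y\notin Z(N)$, so $C(y)$ is abelian; but $a\in C(y)$ (since $y\in C(a)$) and $g\in C(y)$ (where $C_i=C(g)$ with $g\notin N$), so $g$ commutes with $a$, forcing $g\in C(a)=N$, a contradiction. Hence $\overline{C_i}\cap\overline N=1$ \emph{always}, and $\{\overline N,\overline{C_1},\dots,\overline{C_j}\}$ is always a genuine partition of $\overline G$. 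Your intended route to case~(2) therefore never fires, and your claimed deduction of $Z(N)=Z(G)$ in that branch is moot.

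The real source of case~(2) is the \emph{other} branch, where your inference is too strong. Having a nontrivial partition with a proper normal component $\overline N$ does not force a non-$p$-group $\overline G$ to be Frobenius: groups of Hughes--Thompson type (i.e.\ $\overline G$ not a $p$-group but $H_p(\overline G)\ne\overline G$ for some prime $p$) also admit such normal, non-simple partitions. The paper does not prove this theorem directly (it is quoted from Amiri--Rostami), but its proof of the sharper Theorem~\ref{nabcent1} shows the correct split: the partition $\Pi=\{\overline N,\overline{C_i}\}$ is normal and non-simple, and by Zappa's results either $\Pi$ is a Frobenius partition (case~(3)), or $\Pi$ is elementary and $\overline G$ has a normal subgroup of prime index $p$ generated by all elements of order $\ne p$. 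In the latter situation $\overline G$ is either a $p$-group of exponent $>p$ (case~(1)) or of Hughes--Thompson type, where $H_p(\overline G)=\overline N$, $|\overline G:\overline N|=p$, $\overline N$ is nilpotent, and every $\overline{C_i}$ has order $p$; this is precisely what delivers case~(2) (and the centralizer count). So the missing idea is the Hughes--Thompson alternative inside the partition branch, not a separate ``$\overline{C_i}$ meets $\overline N$'' scenario.
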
 
 
In this paper, we revisit finite groups $G$ with $\mid nacent(G)\mid=2$ and improve this result. Among other results, we have also proved that  if $C(a)$ is the proper non-abelian element centralizer of $G$ then $\frac{C(a)}{Z(G)}$ is the Fitting subgroup of $\frac{G}{Z(G)}$, $C(a)$ is the Fitting subgroup of $G$ and $G' \in C(a)$.

 \section{The main results}

In this section, we prove the main results of the paper. We make the following Remark from \cite[Pp. 571--575]{zappa} which will be used in the sequel.

\begin{rem}\label{rem1}
A collection $\Pi$ of non-trivial subgroups of a group $G$ is called a partition if every non-trivial element of $G$ belongs to a unique subgroup in $\Pi$. If $\mid \Pi \mid=1$, the partition is said to be trivial. The subgroups in $\Pi$ are called components of $\Pi$. Following Miller, if $\Pi$ is a non-trivial partition of a non-abelian $p$ group $G$ ($p$ a prime), then all the elements of $G$ having order $>p$ belongs to the same component of $\Pi$.

A partition $\Pi$ of a group $G$ is said to be normal if $g^{-1}Xg \in \Pi$ for every $X \in \Pi$ and $g \in G$.  A non-trivial partition $\Pi$ of a group $G$ is said to be elementary if $G$ has a normal subgroup $K$ such that all cyclic subgroups which are not contained in $K$ have order $p$ ($p$ a prime) and are components of $\Pi$. All normal non-trivial partitions of a $p$ group of exponent $> p$ are elementary. 

A non-trivial partition $\Pi$ of a group $G$ is said to be non-simple if there exists a proper normal subgroup $N$ of $G$ such that for every component $X \in \Pi$, either $X \leq N$ or $X \cap N=1$. Let $G$ be a group and $\Pi$ a normal non-trivial partition. Suppose $\Pi$ is not a Frobenious partition and is non-simple. Then $G$ has a normal subgroup $K$ of index $p$ ($p$ a prime) in $G$ which is generated by all elements of $G$ having order $\neq p$. So $\Pi$ is elementary.

Let $G$ be a group and $p$ be a prime. We recall that the subgroup generated by all the elements of $G$ whose order is not $p$ is called the Hughes subgroup and denoted by $H_p(G)$.  The group $G$ is said to be a group of Hughes-Thompson type if $G$ is not a $p$ group and $H_p(G) \neq G$ for some prime $p$. In such a group we have $\mid G : H_p(G) \mid=p$ and $H_p(G)$ is nilpotent.
\end{rem}

We now determine the structure of finite groups $G$ with $\mid nacent(G)\mid=2$ which improves (\cite[Theorem 1.1]{nab}).

\begin{thm}\label{nabcent1}
Let $G$ be a finite group and $a \in G \setminus Z(G)$. Then $nacent(G)=\lbrace G, C(a) \rbrace$ if and only if one of the following assertions hold:
\begin{enumerate}
	\item  $\frac{G}{Z(G)}$ is a non-abelian $p$-group of exponent $>p$ ($p$ a prime), $\mid  \frac{G}{Z(G)}: H_p(\frac{G}{Z(G)})\mid=p$,  $H_p(\frac{G}{Z(G)})=\frac{C(a)}{Z(G)}$, $\mid \frac{C(x)}{Z(G)} \mid=p$ for any $x \in G \setminus C(a)$ and $C(a)$ is a CA-group.
	\item  $\frac{G}{Z(G)}$ is a group of Hughes-Thompson type, $H_p(\frac{G}{Z(G)})=\frac{C(a)}{Z(G)}$ ($p$ a prime), $\mid \frac{C(x)}{Z(G)} \mid=p$ for any $x \in G \setminus C(a)$ and $C(a)$ is a CA-group.
	\item  $\frac{G}{Z(G)}= \frac{C(a)}{Z(G)} \rtimes \frac{C(x)}{Z(G)}$ is a Frobenius group with  Frobenius Kernel $\frac{C(a)}{Z(G)}$, cyclic Frobenius Complement $\frac{C(x)}{Z(G)}$ for some $x \in G\setminus C(a)$ and $C(a)$ is a CA-group..
\end{enumerate}
\end{thm}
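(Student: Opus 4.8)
The plan is to prove the two implications separately, the forward one carrying the bulk of the work. Write $\bar G=G/Z(G)$ and $\bar H=H/Z(G)$ for $Z(G)\le H\le G$, and note that $a\in Z(C(a))\setminus Z(G)$, so $Z(G)\subsetneq Z(C(a))\subseteq C(a)\subsetneq G$. Assume first $nacent(G)=\{G,C(a)\}$. A short double inclusion gives $\{x\in G:C(x)=C(a)\}=Z(C(a))\setminus Z(G)$: if $x\in Z(C(a))\setminus Z(G)$ then $C(a)\le C(x)\neq G$ makes $C(x)$ non-abelian, so $C(x)\in nacent(G)\setminus\{G\}=\{C(a)\}$; conversely $C(x)=C(a)$ forces $x\in C(a)$ and $x$ central in $C(a)$. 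In particular $C(a)=C_G(Z(C(a)))$, and two facts drop out at once: (i) $C(a)\trianglelefteq G$, since $g^{-1}C(a)g=C(g^{-1}ag)$ is again a proper non-abelian centralizer, hence equals $C(a)$; and (ii) $C(a)$ is a CA-group, since for $g\in C(a)\setminus Z(C(a))$ we have $C(g)\ne C(a),G$, so $C_G(g)$---a fortiori $C_{C(a)}(g)$---is abelian, while $C(a)$ itself is non-abelian.

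The crux is a partition of $\bar G$. First I would prove the key lemma: if $g\in C(a)\setminus Z(C(a))$ then $C_G(g)\subseteq C(a)$---for $C_G(g)$ is abelian and contains $Z(C(a))$, so any $h\in C_G(g)$ centralizes $Z(C(a))$ and thus lies in $C_G(Z(C(a)))=C(a)$. Using this, together with the (standard, CA-type) fact that two distinct self-centralizing abelian centralizers meet exactly in $Z(G)$, one checks that $\Pi=\{\bar C(a)\}\cup\{\,C(x)/Z(G):x\in G\setminus C(a)\,\}$ is a non-trivial partition of $\bar G$: each component is a proper non-trivial subgroup; every non-identity $\bar y$ lies in one ($\bar C(a)$ if $y\in C(a)$, otherwise $C(y)/Z(G)$); and distinct components meet trivially---for $\bar C(a)$ against $C(x)/Z(G)$ this is the key lemma combined with the identification above (an overlapping $\bar y$ would put $x$ in $C(a)$), and for $C(x)/Z(G)$ against $C(x')/Z(G)$ it is the CA-type argument. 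Since $C(a)\trianglelefteq G$, conjugation permutes the components, so $\Pi$ is a normal partition; and taking $N=\bar C(a)$ shows $\Pi$ is non-simple, because each component is either $\bar C(a)$ itself or meets $\bar C(a)$ trivially.

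Now I would feed $\Pi$ into the classification of normal partitions recalled in Remark~\ref{rem1}. Since $\Pi$ is non-simple, it is either a Frobenius partition or (being non-Frobenius) an elementary one, the latter with distinguished normal subgroup $K=H_p(\bar G)$ of index $p$. In either case $\bar C(a)$ is a normal component which, as $C(a)$ is non-abelian, is not cyclic (in particular of order $>p$); hence $\bar C(a)$ must be the distinguished component---the Frobenius kernel, resp.\ $K=H_p(\bar G)$ (it is not one of the cyclic order-$p$ components). If $\Pi$ is Frobenius, the remaining components $C(x)/Z(G)$ are the complements, which are abelian, hence cyclic, and $\bar G=\bar C(a)\rtimes C(x)/Z(G)$: conclusion (c). If $\Pi$ is elementary, then $|C(x)/Z(G)|=p$ for every $x\notin C(a)$; if $\bar G$ is a $p$-group it has exponent $>p$ (as $H_p(\bar G)\ne1$): conclusion (a); otherwise $\bar G$ is of Hughes-Thompson type: conclusion (b). Together with ``$C(a)$ is a CA-group'' from (ii), this proves the forward implication and refines Theorem~\ref{nabcentamiri}.

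For the converse I would run the argument in reverse. In each of (a),(b),(c) the hypothesis exhibits $\bar C(a)$ as a characteristic subgroup of $\bar G$ ($H_p(\bar G)$ or the Frobenius kernel), so $C(a)\trianglelefteq G$, and one reads off the centralizers: for $x\notin C(a)$ the quotient $C(x)/Z(G)$ is cyclic (order $p$ in (a),(b); inside a cyclic complement in (c)), so $C(x)$ is abelian; for $x\in Z(C(a))\setminus Z(G)$ we have $C(a)\le C(x)\neq G$, and $\bar C(a)$ being maximal, resp.\ the kernel, forces $C(x)=C(a)$; and for $x\in C(a)\setminus Z(C(a))$ the same ``small/kernel centralizer'' trick shows $C(x)\subseteq C(a)$, so $C(x)=C_{C(a)}(x)$ is abelian because $C(a)$ is a CA-group (which is in particular non-abelian). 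Hence $nacent(G)=\{G,C(a)\}$. The main obstacle I anticipate is the partition step: proving that distinct components of $\Pi$ intersect trivially modulo $Z(G)$---exactly where the normality of $C(a)$ and the key lemma $C_G(g)\subseteq C(a)$ are indispensable---and then correctly matching $\bar C(a)$ to the distinguished component in each of the three regimes of Remark~\ref{rem1}.
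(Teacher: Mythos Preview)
Your proposal is correct and follows essentially the same route as the paper: build the normal, non-simple partition $\Pi=\{\overline{C(a)}\}\cup\{\overline{C(x)}:x\notin C(a)\}$ of $G/Z(G)$, then invoke the Zappa classification recorded in Remark~\ref{rem1} to split into the Frobenius case versus the elementary (Hughes--Thompson or $p$-group) case. Your write-up supplies more justification than the paper does---the identification $C(a)=C_G(Z(C(a)))$, the explicit proof that distinct components meet trivially, the reason $\overline{C(a)}$ must be the kernel/Hughes subgroup, and a genuine converse argument rather than ``easy to see''---but the architecture is identical.
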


\begin{proof}
Let $G$ be a finite group such that $nacent(G)=\lbrace G, C(a) \rbrace$, $a \in G \setminus Z(G)$. Then clearly $C(a)$ is a CA-group.

Note that we have $C(s) \subseteq C(a)$ for any $s \in C(a)\setminus Z(G)$,  $C(a) \cap C(x)=Z(G)$ for any $x \in G \setminus C(a)$ and $C(x) \cap C(y)=Z(G)$ for any $x, y \in G \setminus C(a)$ with $C(x) \neq C(y)$. Hence $\Pi= \lbrace \frac{C(a)}{Z(G)}, \frac{C(x)}{Z(G)} \mid x \in G \setminus C(a)\rbrace$ is a non-trivial partition of $\frac{G}{Z(G)}$.  In the present scenario we have $(gZ(G))^{-1}\frac{C(x)}{Z(G)}gZ(G)=\frac{g^{-1}C(x)g}{Z(G)}=\frac{C(g^{-1}xg)}{Z(G)}$ for any $gZ(G) \in \frac{G}{Z(G)}$ and $\frac{C(a)}{Z(G)}\lhd \frac{G}{Z(G)}$. Therefore $(gZ(G))^{-1}XgZ(G) \in \Pi$ for every $X \in \Pi$ and $gZ(G) \in \frac{G}{Z(G)}$. Hence $\Pi$ is a normal non-simple partition of $\frac{G}{Z(G)}$.

In the present scenario, if $\Pi$ is a Frobenius partition of $\frac{G}{Z(G)}$, then 
$\frac{G}{Z(G)}= \frac{C(a)}{Z(G)} \rtimes \frac{C(x)}{Z(G)}$ is a Frobenius group with Frobenious Kernel $\frac{C(a)}{Z(G)}$ and cyclic Frobenius Complement $\frac{C(x)}{Z(G)}$ for some $x \in G\setminus C(a)$.

Next, suppose $\Pi$ is not a Frobenius partition. Then in view of Remark \ref{rem1}, $\frac{G}{Z(G)}$ has a normal subgroup of index $p$ ($p$ a prime) in $\frac{G}{Z(G)}$ which is generated  by all elements of $\frac{G}{Z(G)}$ having order $\neq p$.

In the present situation if  $\frac{G}{Z(G)}$ is not a $p$ group ($p$ a prime), then in view of Remark \ref{rem1}, $\frac{G}{Z(G)}$ is a group of Hughes-Thompson type and $\Pi$ is elementary. That is  $\frac{G}{Z(G)}$ has a normal subgroup $\frac{K}{Z(G)}$ such that all cyclic subgroups which are not contained in $\frac{K}{Z(G)}$ have order $p$ ($p$ a prime) and are components of $\Pi$. In the present scenario we have $\frac{K}{Z(G)}=H_p(\frac{G}{Z(G)})$. Therefore $\Pi$ has $\frac{\mid G \mid}{p}$ components of order $p$ and these are precisely $\frac{C(x)}{Z(G)}, x \in G \setminus C(a)$. Consequently, we have $H_p(\frac{G}{Z(G)})=\frac{C(a)}{Z(G)}$.

On the other hand, if $\frac{G}{Z(G)}$ is a $p$ group ($p$ a prime), then  in view of Remark \ref{rem1}, $\frac{G}{Z(G)}$ is  non-abelian of exponent $>p$ and $\mid  \frac{G}{Z(G)}: H_p(\frac{G}{Z(G)})\mid=p$. In the present situation by Remark \ref{rem1}, $\Pi$ is elementary. Therefore using Remark \ref{rem1} again, $H_p(\frac{G}{Z(G)})=\frac{C(a)}{Z(G)}$ and all cyclic subgroups which are not contained in $\frac{C(a)}{Z(G)}$ have order $p$ and are components of $\Pi$. Therefore $\Pi$ has $\frac{\mid G \mid}{p}$ components of order $p$ and these are precisely $\frac{C(x)}{Z(G)}, x \in G \setminus C(a)$. Consequently, we have $H_p(\frac{G}{Z(G)})=\frac{C(a)}{Z(G)}$. 

Conversely, suppose $G$ is a finite group such that one of (a), (b) or (c) holds. Then it is easy to see that $nacent(G)=\lbrace G, C(a) \rbrace$  for some $a \in G \setminus Z(G)$. 
\end{proof}

As an immediate consequence we have the following result. Recall that for a finite group $G$, the Fitting subgroup denoted by $F(G)$ is the largest normal nilpotent subgroup of $G$.

\begin{thm}\label{sjb1} 
Let $G$ be a finite group with a unique proper non-abelian centralizer $C(a)$ for some $a \in G$. Then we have
\begin{enumerate}
	\item $\mid \Cent(G) \mid= \mid \Cent(C(a)) \mid+ \frac{\mid G \mid}{p}+1$, ($p$ a prime) or $\mid \Cent(C(a)) \mid+ \mid \frac{C(a)}{Z(G)} \mid+1$.
	\item $G' \subseteq C(a)$.
	\item  $\frac{C(a)}{Z(G)}$ is the Fitting subgroup of $\frac{G}{Z(G)}$.
	\item $C(a)$ is the Fitting subgroup of $G$.
	\item $C(a)=P \times A$, where $A$ is an abelian subgroup and $P$ is a CA-group of prime power order.
	\item $\frac{G}{C(a)}$ is cyclic.
\end{enumerate}
\end{thm}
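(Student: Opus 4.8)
The plan is to derive all six statements from Theorem~\ref{nabcent1}, which divides the hypothesis $nacent(G)=\{G,C(a)\}$ into the cases (a), (b), (c); one records what holds uniformly and then treats the rest case by case, the one non-routine point being that $\frac{G}{Z(G)}$ must be shown non-nilpotent.

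Uniformly: $C(a)$ is non-abelian (it lies in $nacent(G)$) and, by Theorem~\ref{nabcent1}, is a CA-group, and the proof of that theorem gives $\frac{C(a)}{Z(G)}\lhd\frac{G}{Z(G)}$, hence $C(a)\lhd G$. In every case $\frac{G}{C(a)}\cong\big(\tfrac{G}{Z(G)}\big)/\big(\tfrac{C(a)}{Z(G)}\big)$ is cyclic --- of order $p$ in (a), (b), where $\frac{C(a)}{Z(G)}=H_p(\tfrac{G}{Z(G)})$ has index $p$, and equal to the cyclic Frobenius complement $\frac{C(x)}{Z(G)}$ in (c) --- which is (f); a cyclic, hence abelian, quotient forces $G'\subseteq C(a)$, which is (b). For (a): the facts recorded in the proof of Theorem~\ref{nabcent1} ($C(s)\subseteq C(a)$ for $s\in C(a)\setminus Z(G)$, $C(a)\cap C(x)=Z(G)$ for $x\notin C(a)$, $Z(G)\subseteq C(a)$) give the \emph{disjoint} decomposition $\Cent(G)=\{G\}\cup\Cent(C(a))\cup\{C(x)\mid x\in G\setminus C(a)\}$, so $|\Cent(G)|=|\Cent(C(a))|+1+j$, where $j$ is the number of components of the partition $\Pi$ other than $\frac{C(a)}{Z(G)}$ --- namely the components of order $p$ in cases (a), (b) and the conjugates of the Frobenius complement in case (c) --- and counting these yields the two displayed values.

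Assuming $\frac{G}{Z(G)}$ is not nilpotent, (c)--(e) are short. For (c): $\frac{C(a)}{Z(G)}$ is nilpotent --- it is $H_p(\tfrac{G}{Z(G)})$, nilpotent by Remark~\ref{rem1}, in cases (a), (b), and the Frobenius kernel, nilpotent by Thompson, in case (c) --- and normal, so $\frac{C(a)}{Z(G)}\subseteq F(\tfrac{G}{Z(G)})$; as $F(\tfrac{G}{Z(G)})$ is proper and $\frac{C(a)}{Z(G)}$ has prime index, nothing lies strictly between them, so $F(\tfrac{G}{Z(G)})=\frac{C(a)}{Z(G)}$. For (d): $Z(G)\subseteq Z(C(a))$ together with $\frac{C(a)}{Z(G)}$ nilpotent makes $C(a)$ nilpotent, and normal, while any normal nilpotent $N\lhd G$ has $NZ(G)/Z(G)\subseteq F(\tfrac{G}{Z(G)})=\frac{C(a)}{Z(G)}$, hence $N\subseteq C(a)$; so $C(a)=F(G)$. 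For (e): $C(a)$ is a non-abelian nilpotent CA-group, hence the direct product of its Sylow subgroups with exactly one non-abelian factor $P$ (two non-abelian Sylow factors would give a proper non-abelian centralizer inside $C(a)$), so $C(a)=P\times A$ with $A$ abelian and $P$, being a direct factor of a CA-group, a CA-group of prime power order.

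The obstacle is precisely the hypothesis I granted: that $\frac{G}{Z(G)}$ is non-nilpotent. In case (c) this is automatic, but in case (a) $\frac{G}{Z(G)}$ is a genuine non-abelian $p$-group, and in case (b) a group of Hughes--Thompson type may still be nilpotent; in either situation $\frac{G}{Z(G)}$ equals its own Fitting subgroup, collapsing the argument for (c)--(e). So a complete proof must show that $nacent(G)=\{G,C(a)\}$ with $C(a)\neq G$ forces $G$ (equivalently $\frac{G}{Z(G)}$) to be non-nilpotent; since a nilpotent group is the direct product of its Sylow subgroups, this comes down to showing that no finite $p$-group has exactly one proper non-abelian centralizer. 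I expect that to be the genuinely technical ingredient: one first checks that the unique non-abelian proper centralizer $C$ is normal and is itself a CA-group, and then derives a contradiction from the partition-type constraints this places on the $p$-group. Everything upstream of this is bookkeeping on top of Theorem~\ref{nabcent1}.
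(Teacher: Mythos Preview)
Your approach mirrors the paper's closely: (a), (b), (f) via the cyclic quotient and the partition count; (d) from (c) via $F(G/Z(G))=F(G)/Z(G)$; and (e) by a direct analysis of a nilpotent CA-group where the paper instead cites \cite[Theorem~3.10(5)]{abc} for the same conclusion.

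Where you diverge is in flagging an obstacle for (c) and (d): if $G/Z(G)$ is nilpotent---as it certainly is in case (a) of Theorem~\ref{nabcent1}, being a $p$-group---then $F(G/Z(G))=G/Z(G)\neq C(a)/Z(G)$, and the ``normal nilpotent of prime index, hence Fitting'' argument collapses. You anticipate that the paper resolves this by proving no $p$-group has a unique proper non-abelian centralizer. It does not: the paper's proof of (c) in the non-Frobenius case simply records that $C(a)/Z(G)=H_p(G/Z(G))$ is normal of index $p$ and then asserts that it equals the Fitting subgroup, with no treatment of the $p$-group case. So the ``genuinely technical ingredient'' you expect is absent; the gap you identified is present in the paper's own argument as well. (Your concern about case (b) is unnecessary, however: if $N$ is nilpotent and not a $p$-group, write $N=P\times Q$ with $Q$ a nontrivial $p'$-group; every $(x,y)$ with $y\neq 1$ has order not equal to $p$, and together with $(1,y^{-1})$ such elements generate all of $P\times Q$, so $H_p(N)=N$. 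Hence a group of Hughes--Thompson type is automatically non-nilpotent, and the issue is confined to case (a).)
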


\begin{proof}
(a) In view of Theorem \ref{nabcent1},
if $\frac{G}{Z(G)}$ is a Frobenius group then by \cite[Proposition 3.1]{amiri2019}, we have $\mid \Cent(G) \mid= \mid \Cent(C(a)) \mid+ \mid \frac{C(a)}{Z(G)} \mid+1$.

On the otherhand, if $\frac{G}{Z(G)}$ is not a Frobenius group, then it follows from the proof of Theorem \ref{nabcent1} that the non-trivial partition $\Pi= \lbrace \frac{C(a)}{Z(G)}, \frac{C(x)}{Z(G)} \mid x \in G \setminus C(a)\rbrace$ of $\frac{G}{Z(G)}$ has $\frac{\mid G \mid}{p}$ components of order $p$ and these are precisely $\frac{C(x)}{Z(G)}, x \in G \setminus C(a)$. Hence $\mid \Cent(G) \mid= \mid \Cent(C(a)) \mid+ \frac{\mid G \mid}{p}+1$.

(b) If $\frac{G}{Z(G)}$ is a Frobenius group, then by  Theorem \ref{nabcent1}, $\frac{G}{Z(G)}= \frac{C(a)}{Z(G)} \rtimes \frac{C(x)}{Z(G)}$ with cyclic Frobenius complement $\frac{C(x)}{Z(G)}$ for some $x \in G\setminus C(a)$. Therefore $\frac{G}{C(a)}$ is cyclic and hence $G' \subseteq C(a)$.

On the otherhand, if $\frac{G}{Z(G)}$ is not a Frobenius group, then it follows from Theorem \ref{nabcent1} that  $C(a) \lhd G$ and $\mid \frac{G}{C(a)} \mid=p$ ($p$ a prime). Hence $G' \subseteq C(a)$. 

(c) If $\frac{G}{Z(G)}$ is a Frobenius group, then by  Theorem \ref{nabcent1}, $\frac{G}{Z(G)}= \frac{C(a)}{Z(G)} \rtimes \frac{C(x)}{Z(G)}$ with cyclic Frobenius complement $\frac{C(x)}{Z(G)}$ for some $x \in G\setminus C(a)$. In the present scenario by \cite[Pp. 3]{mukti}, we have $\frac{C(a)}{Z(G)}$ is the Fitting subgroup of $\frac{G}{Z(G)}$.

On the otherhand, if $\frac{G}{Z(G)}$ is not a Frobenius group, then it follows from Theorem \ref{nabcent1} that    $H_p(\frac{G}{Z(G)})=\frac{C(a)}{Z(G)}$ and $\mid  \frac{G}{Z(G)}: H_p(\frac{G}{Z(G)})\mid=p$.   Hence $\frac{C(a)}{Z(G)}$ is the Fitting subgroup of $\frac{G}{Z(G)}$, noting that we have $C(a) \lhd G$. 

(d)It follows from (c) noting that $F(\frac{G}{Z(G)})=\frac{F(G)}{Z(G)}=\frac{C(a)}{Z(G)}$.

(e) Using (d) we have $C(a)$ is a nilpotent CA-group. Therefore  using \cite[Theorem 3.10 (5)]{abc}, $C(a)=P \times A$, where $A$ is an abelian subgroup and $P$ is a CA-group of prime power order.

(f) It is clear from Theorem \ref{nabcent1} that if $\frac{G}{Z(G)}$ is a Frobenius group, then $\frac{G}{C(a)}$ is cyclic and $\mid \frac{G}{C(a)} \mid=p$, ($p$ a prime) otherwise.
\end{proof}



\begin{thebibliography}{33}

\bibitem{ed09}
A. Abdollahi, S. M. J. Amiri and A. M. Hassanabadi,  {\em Groups with specific number of centralizers}, Houst. J. Math., \textbf{33} (1) (2007), 43--57.

\bibitem{abc}
A. Abdollahi, S. Akbari  and H. R. Maimani,  {\em Non-commuting graph of a group}, J. Algebra, \textbf{298}  (2006), 468--492.


\bibitem{amiri2019}
 S. M. J. Amiri, H. Madadi and H. Rostami,  {\em On $9$-centralizer groups},  J. Algebra Appl., \textbf{14} (1) (2015), 01--13.

\bibitem{amiri20191}
 S. M. J. Amiri, H. Madadi and H. Rostami,  {\em Groups with exactly ten centralizers},  Bul. Iran. Math. Soc., \textbf{44}  (2018), 1163--1170.



\bibitem{nab}
 S. M. J. Amiri and H. Rostami,  {\em Groups with a few non-abelian centralizers},  Publ. Math. Debrecen, \textbf{87} (3/4)  (2015), 429--437.


\bibitem{rostami}
 S. M. J. Amiri, M. Amiri and H. Rostami,  {\em Finite groups determined by the number of element centralizers},  Comm. Alg., \textbf{45} (9)  (2017), 1--7.





\bibitem{en09}
 A. R. Ashrafi, {\em On finite groups with a given number of centralizers}, Algebra Colloq., \textbf{7} (2) (2000), 139--146.
 
\bibitem{ctc09}
A. R. Ashrafi, {\em Counting the centralizers of some finite groups}, Korean  J. Comput. Appl. Math., \textbf{7} (1) (2000), 115--124.
 


\bibitem{ctc091}
A. R. Ashrafi and B. Taeri,  {\em On finite groups with exactly seven element centralizers },  J. Appl. Math. Comput., \textbf{22} (1--2) (2006), 403--410.

\bibitem{ctc099}
A. R. Ashrafi and B. Taeri,  {\em On finite groups with a certain number of centralizers },  J. Appl. Math. Comput, \textbf{17} (1--2) (2005), 217--227.

\bibitem{baishya}
S. J. Baishya, {\em On  finite groups with specific number of centralizers}, Int. Elect. J. Algebra, \textbf{13} (2013) 53--62.


\bibitem{baishya1}
S. J. Baishya, {\em On  finite groups with nine centralizers}, Boll. Unione Mat. Ital. , \textbf{9} (2016) 527--531.


\bibitem{baishya2}
S. J. Baishya, {\em On  capable groups of order $p^2q$}, Comm. Alg., \textbf{48} (6)  (2020), 2632--2638.

\bibitem{baishya5}
S. J. Baishya, {\em On  finite CG-groups}, arXiv.1911.06054v1 [math.GR] 14 Nov 2019.



\bibitem{ctc092}
S. M. Belcastro and G. J. Sherman, {\em Counting centralizers in finite groups}, Math. Magazine, \textbf{67} (5) (1994), 366--374.



\bibitem{herzog}
M. Herzog, {\em On finite groups which contain a Frobenius subgroup}, J. Algebra,  \textbf{6} (1967), 192--221.




\bibitem{con}
K. Khoramshahi and M. Zarrin, {\em Groups with the same number of centralizers}, arXiv:1906.09424v1 [math.GR] 22 Jun 2019.




\bibitem{mukti}
A. S. Muktibodh,  {\em A study of Camina (Con-Cos) groups, (Ph.D Thesis submitted to Nagpur University, India)},   (2010).


\bibitem{schmidt}
R. Schmidt, {\em Zentralisatorverb$\acute{\rm a}$nde endlicher Gruppen}, Rend. Sem. Mat. Univ. Padova,  \textbf{44} (1970), 97--131.

  



\bibitem{zarrin094}
M. Zarrin, {\em On element centralizers in finite groups}, Arch. Math.,  \textbf{93} (2009), 497--503.


\bibitem{zarrin0941}
M. Zarrin, {\em Criteria for the solubility of finite groups by their centralizers}, Arch. Math.,  \textbf{96} (2011), 225--226.


\bibitem{zarrin0942}
M. Zarrin, {\em On solubility of groups with finitely many centralizers}, Bull. Iran. Math. Soc., \textbf{39} (3) (2013), 517--521.



\bibitem{non}
M. Zarrin, {\em On non-commuting sets and centralizers in infinite group}, arXiv:1412.4349v1 [math.GR] 14 Dec 2014.




\bibitem{zappa}
G. Zappa, {\em Partitions and other coverings of finite groups}, Illinois. J. Math. , \textbf{47} (1/2) (2003), 571--580.





\end{thebibliography}
\end{document}